\newcommand{\partentry}[1]{\addtocontents{toc}
						{\small\bfseries#1\hfill\thepage\par}}
\def\@part[#1]#2{%
    \ifnum \c@secnumdepth >\m@ne
      \refstepcounter{part}
      \partentry{\protect\makebox[2em][l]{\thepart}#1}
	\else
      \partentry{#1}
    \fi
    {\parindent \z@ \raggedright
     \interlinepenalty \@M
     \normalfont\Large\bfseries\thepart\hspace{1em}#2%
     \markboth{}{}\par}%
    \nobreak
    \vskip 3ex
    \@afterheading}
\def\@spart#1{%
    {\parindent \z@ \raggedright
     \interlinepenalty \@M
     \normalfont\Large\bfseries #1\par} 
     \nobreak
     \vskip 3ex
     \@afterheading}
\renewcommand\section{\@startsection{section}{1}{\z@}%
 						{-3.5ex \@plus -1ex \@minus -.2ex}% negative means
										%don't indent the text to follow
						{2ex \@plus.2ex}% 		positive means vertical skip
						{\large\bfseries}}
\renewcommand\subsection{\@ifstar
						{\setcounter{subsection}{\value{equation}}
					\@startsection{subsection}{2}{\z@}
                          {1.75ex \@plus.5ex \@minus.2ex}%
                           {-.4em}		% negative means horizontal
										%(run-in heading)
					\textit*}
					{\setcounter{subsection}{\value{equation}}
						\stepcounter{equation}
					\@startsection{subsection}{2}{\z@}
                          {1.75ex \@plus.5ex \@minus.2ex}%
                           {-.4em}		% negative means horizontal
										%(run-in heading)
					\textit}}
\def\@seccntformat#1{\@ifundefined{#1@cntformat}%
	{\csname the#1\endcsname\quad} 
	{\csname #1@cntformat\endcsname}} 
\def\section@cntformat{\thesection.~} 
\def\subsection@cntformat{(\thesubsection)\ }
\renewcommand*\l@section{\mdseries\small\@dottedtocline{1}{1.5em}{2em}}
\numberwithin{equation}{section}
\theoremstyle{plain}
\newtheorem{maintheorem}{Theorem}
\newtheorem{conjecture}{Conjecture}
\newtheorem{theorem}[equation]{Theorem}
\newtheorem{lemma}[equation]{Lemma}
\theoremstyle{definition}
\newtheorem{definition}[equation]{Definition}
\theoremstyle{remark}
\newtheorem{remark}[equation]{Remark}
\newcommand{\cA}{\mathscr{A}}
\newcommand{\cC}{\mathscr{C}}
\newcommand{\cF}{\mathscr{F}}
\newcommand{\cL}{\mathscr{L}}
\newcommand{\frg}{\mathfrak{g}}
\newcommand{\frt}{\mathfrak{t}}
\newcommand{\bC}{\mathbb{C}}
\newcommand{\bF}{\mathbb{F}}
\newcommand{\bQ}{\mathbb{Q}}
\newcommand{\bZ}{\mathbb{Z}}
\begin{document}
\title{\textbf{Quantization commutes with reduction again: \\
the quantum GIT conjecture I}}
\author{Daniel Pomerleano\footnote{Partially supported by NSF grant DMS-2306204}, Constantin Teleman\footnote{Partially 
supported by the Simons Collaboration of Global Categorical Symmetries}}
\date{\today}
\maketitle
%%%%%%%%%%%%%%%%%%%%%%%%% ABSTRACT %%%%%%%%%%%%%%%%%%%%%%%%%%%%%%%%%%%%%%%%%
\begin{quote}
\abstract{
\noindent 
For a compact monotone symplectic manifold $X$ with Hamiltonian action of a compact Lie group $G$ and smooth symplectic 
reduction, we relate its gauged $2$-dimensional $A$-model to the $A$-model of $X/\!/G$. This (long conjectured) result is 
parallel to the ($B$-model!) \emph{quantization commutes with reduction} theorem of Guillemin and Sternberg in quantum 
mechanics. Here, we spell out some of the precise statements, and 
outline the proof of equality for the spaces of states (quantum cohomology). We also indicate the way to some related results 
in the non-monotone case. Additional Floer theory details will be included in a follow-up paper. 

}
\end{quote}

\section*{Main Results}

\subsection*{Basic case.} In Theorem~\ref{main}, we equate the \emph{gauged} quantum cohomology of a compact 
$G$-monotone\footnote{See below for our definition of $G$-monotone, stronger than monotone when $G$ 
is non-abelian.} 
symplectic manifold $X$ under a Hamiltonian action of a connected compact group $G$ with that of its symplectic reduction at the 
anticanonical linearization: 
\begin{equation}\label{loopgp}
QH^*_{LG}(X)\cong QH^*(X/\!/G). 
\end{equation}
This assumption was present, at least implicitly, in the 
physics-inspired literature on the $A$-model QFT of the 
1990s, in strong analogy with the quantum mechanics situation, recalled in \S\ref{back}. It was the background of 
early quantum cohomolgy calculations, such as Batyrev's formula \cite{bat}.

The subscript $LG$ indicates equivariance under the \emph{free loop group} of $G$; its action on the Floer complex will be 
described in \S\ref{topology}. The gauged side thus incorporates twisted sectors, as in the case of finite groups. The 
statement is subject to the known constraint that the quotient should be free: even the orbifold 
case requires multiplicative adjustment, 
although an additive isomorphism holds. 

\subsection*{Formality.} With complex (or rational) coefficients, a stronger \emph{formality} result describes $QH^*(X/\!/G)$ as a quotient 
of $QH^*_G(X)$:
\begin{equation}\label{formal}
QH^*(X/\!/G) \cong QH^*_G(X)\otimes _{H_*^G(\Omega G)} H^*(BG).
\end{equation}
The tensor product is strict, with no higher Tor groups. Going forward, we will use complex coefficients in cohomology 
without further comment.

Recall that $\mathrm{Spec}\, H_*^G(\Omega G)$, with its projection to $\mathrm{Spec} H^*(BG)$, is the total space of the 
\emph{Toda integrable system} (for the Langlands dual group $G^\vee$), a completely integrable algebraic symplectic manifold \cite{bfm}. Formula~\eqref{formal} restricts $QH^*_G(X)$ to the 
unit section $\mathrm{Spec}\,H^*(BG)$ of the Toda space. This stronger result is a long intended application of the theory of 
gauged categories and 2D TQFTs \cite{telicm}; additional details were described over time in numerous lectures, but the lack of as 
 single comprehensive account requires a slew of self-citations  \cite{telaus, telc}, for which the second author apologizes.  
The remarkable success of algebro-geometric techniques in Gromov-Witten theory was partly responsible for the delay in 
execution: it took time to accept this as a genuine Floer theory result. We do not know an algebro-geometric proof. 

\subsection*{Prior work.}
Our argument only addresses the additive isomorphism, and we rely on earlier work of Wehrheim-Woodward \cite{ww}, Fukaya 
\cite{f, f2} and, most recently, Xiao \cite{xiao} to transfer the multiplicative structure: this exploits the equivariant Lagrangian correspondence defined by the 
zero-fiber of the moment map $\mu$. Monotonicity and freedom of the $G$-action are essential for this part of the argument. Removing those assumptions leads to deformations in the multiplicative structures. 

\subsection*{Quantum Martin formula.} When the symplectic quotients of $X$ under $G$ and its maximal torus~$T$ are both 
smooth, their (classical) cohomologies are related by an explicit formula due to Shaun Martin~\cite{mar}. 
For projective manifolds, a quantum version of that formula was established by Gonzalez and Woodward \cite[Thm.1.6]
{gw}. For us, that relation is, instead, a formal 
consequence of the relation between the Toda spaces for $G$ and $T$---the \emph{string topology domain wall} of 
\cite[\S5]{telicm}---which control the respective versions of Formula~\eqref{formal}. The annihilator ideal in Martin's 
description is the portion of $LT$-equivariant quantum cohomology which is \emph{not} fully $LG$-equivariant: it is intercepted 
by the exceptional fiber of the Toda integrable system, but not by the unit section $\mathrm{Spec}\,H^*(BG)$.

\subsection*{Trapped cohomology.}
Our argument relies on \emph{Floer continuation} under Hamiltonians $K\|\mu\|^2$, and part of it extends to the case 
where $X$ is monotone, but the $G$-linearization is not anticanonical: namely, $LG$-descent 
from the stable Morse stratum in $X$ to $X/\!/G$ always holds.  Then, $QH^*(X/\!/G)$ is additively expressed as 
a quotient of $QH^*_{LG}(X)$, with kernel the \emph{trapped cohomology}: this is defined from a Floer subcomplex of classes 
which stop at critical sets under Floer continuation. This subcomplex is an ideal; however, just as in \cite{f, f2, xiao}, 
one sees a bulk deformation of the multiplicative structure of $QH^*_G(X)$, vis-\`a-vis the quotient of $QH^*_{LG}(X)$. 
Since $X/\!/G$ is generally not monotone, its $QH^*$ involves a Novikov parameter, which appears in the deformation class.

We do not know a description of the trapped ideal in terms of fixed-point localization in~$X$; 
this is surely related to the difficulty of an algebro-geometric approach. The trapped part can be characterized in terms of the 
symplectic action, and is nicely connected with recent work of Varolgunes \emph{et al.}~\cite{var1, var2} on symplectic 
cohomology with supports: for instance, in Theorem~\ref{supports}    we equate $QH^*(X/\!/G)$ additively with the $LG$-equivariant 
symplectic cohomology of $X$ with supports near $\mu^{-1}(0)$. 
That discussion will be developed in our follow-up paper \cite{pt}, so the statement for now functions as an announcement.  

\subsection*{The non-monotone case.} 
Assuming a genuinely equivariant Floer theory,\footnote{See the \emph{Floer techniques} paragraph below.} 
the last part of our story does not even require monotonicity of~$X$. There is always a short exact sequence 
\[
 0\xrightarrow{\ } tHF^*_{LG}(X) \xrightarrow{\ } QH^*_{LG}(X) 
	\xrightarrow{\ } QH^*(X/\!/G) \xrightarrow{\ } 0
\]
describing the quotient \emph{aditively} in terms of $LG$-equivariant and trapped Floer complexes of~$X$. As before, the 
multiplication on the quotient is deformed. 

One additional difficulty here arises when using a \emph{formally complete} Novikov base ring  
for the Floer complex. This conceals the trapped part $tHF^*_{LG}(X)$: the torus of Seidel shift operators degenerates in the 
classical limit, their spectrum becoming adically disconnected by the Novikov variable; the trapped part is lost to  
infinitely large, or small, eigenvalues and does not appear  perturbatively in the Novikov parameter. This 
is may be why this geometric trapping phenomenon was not flagged in prior work, even though the mismatch in ranks of quotients, 
in toric mirror symmetry calculations, reveals it clearly  (see also Remark~\ref{conv}). 

\subsection*{Non-abelian $G$-monotonicity.} We should now confess to a limitation of our present proof in the case of 
\emph{non-abelian} $G$. The simple argument we outline here, based on a \emph{monotone index} estimate, fails for 
non-abelian groups when $X$ has fixed-points with moment image  too close to zero. For instance, when 
$G=\mathrm{SU}(2)$, we must avoid fixed points with weights $1$ or $2$. Generally, we require \emph{no wall-crossings}  
between the reductions of $X$ and those of $X\times F$, for the  flag varieties $F$ of $G$.    
Equivalently, all dominant co-weights $\xi$ of $G$ must satisfy, for all $\xi$-fixed-points $x\in X$, 
\begin{equation}\label{supermonotone}
\langle \mu(x) |\xi\rangle > 2\langle \rho \,|\, \xi\rangle.
\end{equation} 
If \eqref{supermonotone} fails, some $G$-equivariant cohomology \emph{may well be trapped} at $x$. 
Such is the case for flag varieties of $G$, or (say) for the $\mathrm{SU}(2)$ action on the 
Hirzebruch surface $\bF_1$, when \emph{all} cohomology is trapped. Morally, the positivity of a small co-adjoint 
orbit breaks the $G$-transversal positivity of~$X$. Remarkably, the statements 
\eqref{loopgp}, \eqref{formal} still seem to hold: a more sophisticated argument, relying on the Rietsch mirrors of flag varieties, indicates the absence of \emph{$LG$-equivariant} 
trapped cohomology. We shall postpone that discussion to as subsequent paper, as it requires reconciling mirrors of flag 
varities, as in \cite[\S6]{telicm}, with our analysis of Floer continuation.

\subsection*{No Floer differentials?} One undertone of the results is the possible \emph{formality} of 
the $G$-equivariant Floer complex of $X$ after deformation by Hamiltonians $K\|\mu\|^2$. Namely, after a gauge transformation 
of the complex,\footnote{This is expected to be the conjugation by $\exp(W/\hbar)$, where the superpotential $W$ is the fundamental solution to the $D$-module 
defined by $QH^*_G(X)$, and $\hbar$ is the equivariant parameter for loop rotation.} we expect the only 
differentials on the spaces of orbits to be the topological ones that execute the passage from plain to equivariant 
cohomology. It seems difficult to formulate a robust claim, though; for instance, $K\|\mu\|^2$ need not satisfy 
the Morse-Bott property required to \emph{define} the Floer complex. For torus groups, 
a small shift of $\mu$ addresses this, but other  perturbations are needed  in the non-abelian case, possibly leading to additional critical sets and  differentials.

\subsection*{Floer techniques.} If equivariant Floer theory works as expected, our statements carry straightforward 
generalizations, such as compatibility with the $E_2, E_3$ structures of TQFT, and the arguments in our paper settle 
that. However, working outside the monotone case requires the technique of virtual fundamental classes, for which 
the equivariant case is not yet fully developed. Without these tools, the case of monotone $X$ admits the work-around of 
family Floer methods over $BG$. (See \S\ref{topology} for the starting definition, but details will appear in \cite{pt}.) 
Thus, all claims in this paper concerning non-monotone $X$ are \emph{conditional on a good equivariant Floer theory}: see 
e.g.~\cite[Assumption 1]{xiao}.

\subsection*{History.}
The \emph{conjectural} form of Theorem~\ref{main} has long featured in lectures by the second author, as a key application of 
the character theory of categorical group representations \cite{telicm}. It was first 
announced as a theorem, with the outline of our current proof, in \cite[iv]{telaus}; a plan for it had been proposed in \cite[iii] {telaus}, but key Floer ingredients were missing.  Some elements of our construction have 
since appeared elsewhere, but we 
believe ours to be the first unified treatment.  

\subsection*{Acknowledgements.}
Several colleagues---among whom we mention Denis Auroux, Kenji Fukaya, Hiroshi Iritani, Paul Seidel, Chris Woodward---contributed 
comments and suggestions; most importantly, on the role of monotonicity and freeness of the 
action, the role of bulk deformations, possible relations to the quantum Kirwan map and the quantum blow-up formulas. We gratefully 
acknowledge their help.

%%%%%%%%%%%%%%%%%%%%%%%%%%%%%%%%%%%%%%%%%%%%%%%%%%
\section{Background and Statements} \label{back}
The famous `quantization commutes with reduction' theorem of Guillemin and Sternberg \cite{gs2} expresses the commutation of 
the \emph{geometric quantization} \cite{so, k} of a classical phase space with \emph{reduction} (quotienting out) under a compact 
group of symmetries. Precisely stated, let $(X, \omega)$ be a compact symplectic manifold and $\cL\to X$ a \emph{prequantum 
line bundle} (a complex line bundle with hermitian connection with curvature $2\pi i\omega$). Assume that $X$ has been equipped with 
a positive, integrable complex polarization of $\omega$. Then, $\cL$ becomes an ample holomorphic line bundle, 
and~$X$ a projective K\"ahler manifold. The holomorphic sections assemble to the Hilbert space $\Gamma(X;\cL)$ of the 
geometric quantization. 

Let now a compact, connected Lie group $G$ act isometrically on $X$ and $\cL$; the lift defines Hamiltonians for the 
infinitesimal action, captured by a \emph{moment map} $\mu:X\to \frg^*$. Work of Hesselink, Kempf-Ness and Kirwan \cite{h, kn, k} 
equate Mumford's \emph{GIT quotient} of $X$ by the complexification of $G$, defined algebraically  \cite{mum} as 
$\mathrm{Proj} \bigoplus\nolimits_{n\ge 0} \Gamma(X;\cL^n)$, with the symplectically constructed \emph
{Marsden-Weinstein-Meyer reduction} $X/\!/G:= \mu^{-1}(0)/G$  \cite{mw, mey}. The former is manifestly algebraic, the latter symplectic, leading, when $G$ acts freely on $\mu^{-1}(0)$, to a K\"ahler structure and a descended holomorphic line bundle $\cL/\!/G$. 

\subsection{Dimension $1$ results: Quantum mechanics.} 
A restriction map 
\[
\Gamma(X;\cL)^G \to \Gamma(X/\!/G;\cL/\!/G)
\]
is mediated by $\mu^{-1}(0)$; Guillemin and Sternberg show this to be an isomorphisms. The result has vast generalizations, 
to almost complex structures \cite{mein} or to higher cohomology \cite{telq}. Anachronistically, we could call this a \emph{B-model} statement, as it pertains to holomorphic data, despite the intermediate 
appearance of symplectic forms.  

The companion \emph{A-model} result is due to Kirwan \cite{k}. It does not require integrability of the complex structure, but 
uses an invariant metric on $\frg$. Viewing $\|\mu\|^2$ as a Morse function leads to a $G$-stratification of 
$X$, with lowest stratum retracting equivariantly to $\mu^{-1}(0)$. Kirwan shows the stratification to be \emph{equivariantly perfect} for rational cohomology: it defines a filtration on $H^*_G(X;\bQ)$ with quotients the $G$-equivariant cohomologies with supports on the strata. The top quotient $H^*_G\left(\mu^{-1}(0)\right)$ agrees with $H^*(X/\!/G)$ when the action is locally free. 

\subsection{Difference betwen the statements.} The second ($A$-model) conclusion may seem less sharp. The underlying reason 
lies in the nature of the two representation theories. The $B$-model statement pertains to linear actions of $G$ on 
(complexes of) vector spaces in the usual sense (`$B$-model actions'). 
Kirwan's statement pertains to \emph{topological representations}, most easily defined as (derived) local systems over the 
classifying space $BG$. Here, the sections of the local system represents the equivariant cochains on $X$. Now, whereas $B$-model 
representation theory is semi-simple, the $A$-model version is entirely `derived' when $G$ is connected: the cohomology of 
a space is invariant under any connected group of automorphisms, and the action is concealed in differentials and extension 
classes. Hoping for the (derived) invariants to project, out of $H^*(X)$, the desired part $H^*(X/\!/G)$, would be far too 
optimistic; the filtration conclusion is optimal. 

\subsection{Dimension $2$ and TQFT.}
Surprisingly, the stories switch for $2$-dimensional quantization. Here, the $B$-model representation theory is derived, while the 
$A$-model gives an exact answer (admittedly, subject to strong positivity assumptions). The reason is traced to the rigid (complex holomorphic) \emph{character theory} for topological $G$-actions on linear categories \cite{telicm}. 

Let us first recall the $B$-model story, with $X,\cL$ and $G$ be as before. Lurie's Cobordism Hypothesis \cite{l} ensures that the 
derived category $D^b\cC oh(X)$ generates a fully local, $2$-dimensional topological quantum field theory (TQFT) for framed 
manifolds. In this sense, $D^b\cC oh(X)$ replaces the space of sections $\Gamma(X;\cL)$ of quantum mechanics. Consider the 
$\|\mu\|^2$-Morse stratification, which turns out to be algebraic (and agrees with the instability stratification \cite{kn, k}). 
The analogue of Kirwan's filtration, proved by Halpern-Leistner \cite{hl} and independently (in a slightly 
different formulation) by Ballard, Favero and Katzarkov \cite{bfk} is a \emph{semi-orthogonal decomposition} of the 
$G$-equivariant version of $D^b\cC oh(X)$: one component is $D^b\cC oh(X/\!/G)$, with the other components associated to 
the unstable strata. (See \emph{loc.~cit.} for precise statements.)

Turning to the $A$-model, we abandon the integrability of the polarization, landing in the world of $J$-holomorphic 
structures; but we place the strong restriction that $\cL = \det(TX)$ (in the almost complex structure), with its natural 
$G$-action. Equivalently, $\omega$ is in the class of $c_1(TX)$ (monotonicity) and the moment map defines an equivariant 
refinement of that relation: $[\omega -\mu] = c_1\in H^2_G(X)$. We can then define the Fukaya category $\cF(X)$ over the 
Laurent polynomial ring in a single variable $q$ of degree $2$; in parallel, the quantum multiplication corrects the classical 
one on cohomology by polynomial terms in $q$. The category 
$\cF(X)$ is believed to generate a $2$D TQFT for oriented 
surfaces, reproducing Gromov-Witten theory in dimensions $1$ 
and $2$. This leads to the  

\begin{conjecture}\label{fconj}
The Lagrangian correspondence $\mu^{-1}(0) \subset X\times X/\!/G$ induces an equivalence of categories 
$\cF(X)^G \equiv \cF(X/\!/G)$. 
\end{conjecture}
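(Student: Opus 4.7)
The plan is to construct the functor $\Phi\colon\cF(X)^G\to\cF(X/\!/G)$ geometrically, via the Lagrangian correspondence $\mu^{-1}(0)\subset X^-\times X/\!/G$, using the Wehrheim--Woodward--Fukaya--Xiao machinery invoked in the ``Prior work'' paragraph. Concretely, a $G$-equivariant Lagrangian $L\subset X$ is sent to the geometric composition $L\circ\mu^{-1}(0)=(L\cap\mu^{-1}(0))/G\subset X/\!/G$, which is a smooth Lagrangian provided $L$ is $\mu$-transverse; general $L$ will be reached through quilted Floer theory so that $\Phi(L)$ is defined on objects as a formal composition, with the $A_\infty$-structure maps extracted from counts of quilted pseudo-holomorphic strips. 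The $G$-monotonicity assumption controls energy and bounds sphere/disc bubbling, and freeness of the action on $\mu^{-1}(0)$ guarantees that the intermediate seam correspondence is an embedded monotone Lagrangian, keeping us inside the domain where the WW--Fukaya--Xiao formalism produces an honest $A_\infty$-functor.

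Second, I would prove full faithfulness by verifying the isomorphism on morphism spaces $HF_G^*(L_0,L_1)\xrightarrow{\sim}HF^*(\Phi L_0,\Phi L_1)$ and then bootstrapping to the $A_\infty$-level. The cohomology-level statement is the categorical analogue of Theorem~\ref{main} applied to the product $X^-\times X$ with its diagonal $G$-action and the Lagrangian $L_0\times L_1$: the Floer complex $CF_G^*(L_0,L_1)$ plays the role of the gauged complex, $\mu^{-1}(0)$-continuation plays the role of the Hamiltonian flow of $K\|\mu\|^2$, and the Morse-theoretic argument used for $QH^*_{LG}(X)\to QH^*(X/\!/G)$ applies verbatim once one works in the Lagrangian rather than the closed-string setting. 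The $LG$-equivariance drops out because the loop-group twisted sectors are detected internally by the $\mu^{-1}(0)$-reduction, exactly as in the closed case. The higher $A_\infty$-maps then follow from a TQFT-style degeneration argument, pinching each disc with $k$ boundary marked points into a configuration with a single seam along $\mu^{-1}(0)$.

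Third, for essential surjectivity I would argue that every Lagrangian $\bar L\subset X/\!/G$ lifts to a $G$-invariant Lagrangian $L\subset\mu^{-1}(0)\subset X$ (the preimage under $\mu^{-1}(0)\to X/\!/G$ is a coisotropic whose characteristic foliation is precisely the $G$-orbits, so $\bar L$ lifts canonically), and that $\Phi(L)\simeq\bar L$ by construction. Combined with the known split-generation of $\cF(X/\!/G)$ by Lagrangians of this type (or, failing that, by passing to a split-closed enlargement and using Abouzaid-style generation criteria applied to the diagonal, whose image under $\Phi\otimes\Phi$ is the diagonal of $X/\!/G$ by the cohomology computation above), this forces $\Phi$ to be a Morita equivalence. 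One may alternatively deduce essential surjectivity from a Hochschild-cohomology comparison: full faithfulness gives a map $HH^*(\cF(X)^G)\to HH^*(\cF(X/\!/G))$ whose source and target are both computed by $QH^*(X/\!/G)$ via the open-closed map and Theorem~\ref{main}, so it is an isomorphism, and standard generation criteria then promote fullness to essential surjectivity.

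The principal obstacle is the $A_\infty$-level construction itself: the WW--Fukaya--Xiao formalism must be implemented $G$-equivariantly, and one has to check that the bulk deformation identified in \cite{f,f2,xiao} is trivial under the $G$-monotone anticanonical hypothesis. A secondary difficulty is transversality for quilted moduli in the presence of a group action, which, outside the monotone case, requires equivariant virtual techniques of the kind flagged in the ``Floer techniques'' paragraph; staying strictly inside the $G$-monotone regime of Theorem~\ref{main} avoids this but restricts the statement. Finally, as the introduction notes, when condition~\eqref{supermonotone} fails one expects trapped Floer objects on the gauged side that are killed by $\Phi$, so the functor should only be an equivalence after first quotienting $\cF(X)^G$ by the full subcategory of trapped Lagrangians — identifying this subcategory with $\mathrm{ker}(\Phi)$ is the categorification of the trapped-cohomology exact sequence and is, I expect, the technical heart of the full conjecture.
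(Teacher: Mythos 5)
The paper states this as a conjecture and explicitly declines to prove it, so there is no argument of theirs to compare yours against; and your proposal walks directly into the two obstructions the authors give as their reasons for leaving it open. Your construction of $\Phi$, and especially your essential-surjectivity step --- which lifts $\bar L\subset X/\!/G$ to a $G$-invariant Lagrangian $L\subset\mu^{-1}(0)$ and declares $\Phi(L)\simeq\bar L$ --- treats the homotopy fixed-point category $\cF(X)^G$ as if its objects were $G$-invariant Lagrangians in $X$. The paper's footnote to Conjecture~\ref{fconj} warns that precisely this definition ``produce[s] incorrect answers''; the correct $\cF(X)^G$ requires an \emph{uncompletion}, whose construction is deferred to a forthcoming paper by the second author. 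Without that definition you do not actually know what the source category of $\Phi$ is, so neither the on-objects description nor the essential-surjectivity lifting can be set up, let alone proved.

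Your alternative route through Hochschild cohomology is blocked by the authors' second obstruction: you want to use the open-closed map and Theorem~\ref{main} to identify $HH^*(\cF(X)^G)$ and $HH^*(\cF(X/\!/G))$ with $QH^*(X/\!/G)$, but the paper states that $\cF(X)$ ``is not known to be smooth and proper in general, nor is its Hochschild cohomology known to agree with the quantum cohomology $QH^*(X)$'' --- which is exactly why the authors prove the quantum-cohomology statement directly rather than deriving it from the categorical one. Your closing paragraph correctly flags the trapping caveat outside the $G$-monotone regime and the need for equivariant virtual techniques, but the two gaps above are not peripheral technicalities to be managed en route; they are the substance of the conjecture, and are the reasons the authors state it without proof.
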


\noindent
The methods we use for our main theorem here are expected to affirm this conjecture, but we do not address it here, 
for two reasons:
\begin{enumerate}\itemsep0ex 
\item While it is easy to define the topological $G$-action on the Fukaya category $\cF(X)$ by Floer methods, 
the (homotopy) fixed-point category $\cF(X)^G$ requires some care: an \emph{uncompletion} is needed,\footnote{Attempts to define it 
using $G$-invariant Lagrangians produce incorrect answers.}  as indicated in \cite[v]{telicm,telaus}.
Details of the construction will appear in a paper by the second author. 
\item The category $\cF(X)$ is not known to be smooth and proper in general, nor is its Hochschild cohomology 
known to agree with the quantum cohomology $QH^*(X)$; so not many Gromov-Witten consequences could 
be immediately extracted\footnote{For GW theory, one must supplement the category with a splitting of the 
Hochschild-to-cyclic sequence, a mysterious ingredient for now. See, however, recent work of Iritani \cite{i} for circle actions, with remarkable application to the blow-up conjecture.} from Conjecture~\ref{fconj}. Rather the opposite: 
quantum cohomology controls deformations of $\cF(X)$ via the 
closed-to-open map, so consequences for $\cF(X)^G$ may be derived from $QH^*_G(X)$.
\end{enumerate}

\subsection{New results.}
We address instead quantum cohomology directly, proving the following \emph{twisted sector} description of the quantum cohomology 
of $X/\!/G$. This is analogous to Batyrev's recipe for the quantum cohomology of toric Fano varieties, when $X$ is a linear 
representation of a torus $G$. (Our argument may in fact be adapted to that situation.) Cohomology has complex coefficients.

\begin{maintheorem} \label{main}
Let $X$ be compact, symplectic, $G$-monotone, with Hamiltonian action of the compact group $G$, linearized by $\det(TX)$. 
\begin{enumerate}\itemsep0ex
\item The equivariant quantum cohomology $QH^*_G(X)$ is an algebra over the equivariant homology $H_*^G(\Omega G)$ (with its Pontryagin product).
\item The Lagrangian correspondence $\mu^{-1}(0)$ defines a $QH^*_G(X)-QH^*(X/\!/G)$ bimodule \cite{ww, f}.
\item When $G$ acts freely on $\mu^{-1}(0)$, this bimodule induces an algebra isomorphism 
\[
QH^*(X/\!/G) \cong QH^*_G(X) \otimes_{H_*^G(\Omega G)} H^*(BG);
\]
the tensor product is strict.
\item When the $G$-action is only locally free, we obtain an \emph{additive} isomorphism in (iii).
\end{enumerate}
\end{maintheorem}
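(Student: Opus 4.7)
The plan is to prove the four parts in sequence, with (iii) carrying the technical weight and (iv) being a straightforward adaptation.

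For (i), I would construct the $H_*^G(\Omega G)$-action on $QH^*_G(X)$ via equivariant Seidel shifts: a based loop $\gamma\in\Omega G$ defines a class in the $G$-equivariant Hamiltonian mapping class group of $X$, and continuation along its mapping torus induces an automorphism of the $G$-equivariant Floer complex. Parametrizing families by cycles in $\Omega G$ (with the $G$-action by pointwise conjugation) produces the $H_*^G(\Omega G)$-action on $QH^*_G(X)$; compatibility of the Pontryagin product with composition of loops follows from a standard homotopy on the moduli of annuli with two incoming boundary components. Part (ii) is then the construction of Wehrheim--Woodward \cite{ww}, in the equivariant refinement of Fukaya \cite{f, f2} and Xiao \cite{xiao}: since $\mu^{-1}(0)$ is a $G$-invariant coisotropic that, under freeness, fibers as a principal bundle over $X/\!/G$, the correspondence yields a $QH^*_G(X)$--$QH^*(X/\!/G)$-bimodule, with $G$-monotonicity furnishing the index estimates that control holomorphic quilts with seam on $\mu^{-1}(0)$.

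For (iii) the strategy splits into three sub-steps. First, deform the $G$-equivariant Floer data by the Hamiltonian $K\|\mu\|^2$, a (Morse--Bott, after a small shift in the non-abelian case) function whose gradient flow induces the Kirwan stratification. Second, apply Floer continuation as $K\to\infty$: under $G$-monotonicity, a monotone index estimate on holomorphic cylinders with ends on unstable critical sets of $\|\mu\|^2$ rules out any equivariant Floer class being trapped above $\mu^{-1}(0)$; the $LG$-equivariant Floer complex of $X$ is thus cohomologically equivalent to that of a retracting neighborhood of $\mu^{-1}(0)$, which by freeness is equivariantly homotopy equivalent to $X/\!/G$. This yields the additive isomorphism~(\ref{loopgp}). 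Third, identify $QH^*_{LG}(X)$ with the \emph{strict} tensor product $QH^*_G(X)\otimes_{H_*^G(\Omega G)}H^*(BG)$: the description of $\mathrm{Spec}\,H_*^G(\Omega G)$ as the Toda integrable system \cite{bfm}, together with the flatness of its unit section $\mathrm{Spec}\,H^*(BG)$, ensures vanishing of the relevant Tor groups. Multiplicativity of the resulting map is then imported from the $QH^*_G(X)$-action via the bimodule of part (ii), following \cite{xiao}.

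Part (iv) invokes the same Floer continuation argument, which is cohomological in nature and insensitive to the passage from free to locally free actions. What fails is the clean transfer of the multiplicative structure: when $X/\!/G$ is an orbifold and $\mu^{-1}(0)$ is no longer a principal bundle, higher curvature corrections in the bimodule of (ii) deform the quantum product, as anticipated in the trapped-cohomology discussion; one retains only the additive isomorphism.

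The main obstacle is the second sub-step of (iii): establishing the monotone index estimate that forbids trapping at higher strata of the $\|\mu\|^2$-Morse flow. This is precisely where the $G$-monotonicity inequality~(\ref{supermonotone}) enters essentially, and is the reason the present argument is restricted to non-abelian $G$ without fixed points of small moment-image; removing this hypothesis requires the flag-variety mirror analysis deferred to the sequel.
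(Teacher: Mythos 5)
Your overall strategy tracks the paper's: Floer continuation under the Hamiltonian $\frac{1}{2}K\|\mu\|^2$, a monotone-index estimate controlled by the $G$-monotonicity inequality, importation of multiplicativity via the Lagrangian correspondence, and the Toda-space description of $\mathrm{Spec}\,H^G_*(\Omega G)$. However, there are three places where the argument as written has real gaps.

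First, the claim that the $LG$-equivariant Floer complex localized to the normal neighborhood $N\supset\mu^{-1}(0)$ is ``equivariantly homotopy equivalent to $X/\!/G$'' is not correct as stated and skips the core of the paper's Theorem~\ref{techtheorem}. The Hamiltonian Floer complex of $N$ (a $T^*G$-disk bundle over $X/\!/G$) is not the cohomology of its core: it fibers over $X/\!/G$ with fiber $SH^*(T^*G)\cong H_{\dim G-*}(LG)$, with Hamiltonian orbits over each point indexed by geodesics in $G$. Already for $X$ a point and $G=S^1$ this gives $H_*(LS^1)$, not $H^*(\mathrm{pt})$. One must then use the constant-orbit section $\mu^{-1}(0)/G$ together with the monodromy $LG$-action on the fiber to collapse the fiber direction; that collapse is the content of Theorem~\ref{techtheorem}.ii--iv, and it requires a separate argument (contractibility of $\mathfrak{g}$ in the abelian case, and a further retraction through the region of normalized degree zero for non-abelian $G$, since $\mathrm{ndeg}$ jumps across conjugate points).

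Second, the strictness argument for the tensor product is not an argument. The unit section $\mathrm{Spec}\,H^*(BG)\hookrightarrow\mathrm{Spec}\,H^G_*(\Omega G)$ is a closed immersion, not a flat morphism, so ``flatness of the unit section'' does not yield vanishing of Tor. The paper instead shows that $QH^*_G(X)$ is a (maximal) Cohen--Macaulay module over $H^G_*(\Omega G)$---because it is finite and free under the projection to $\mathrm{Spec}\,H^*(BG)$---and that its support meets the unit section in a zero-dimensional subscheme; together these kill the higher Tor groups. You need to supply this (or an equivalent) commutative-algebra step.

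Third, your treatment of part (iv) understates what changes when the action is only locally free. The failure is not merely multiplicative. The constant orbits in $\mu^{-1}(0)$ cover only the untwisted sector of the inertia stack $I(X/\!/G)$; the twisted sectors correspond to $g$-twisted Floer complexes of $X$, built from orbits covered by fractional circles in $G$ with $g$ a stabilizer in $\mu^{-1}(0)$. Exhibiting the additive isomorphism requires matching these fractional sectors with the twisted components of $I(X/\!/G)$ and invoking $K$-theoretic Kirwan surjectivity \cite{harl}; this is additional content, not a cosmetic repetition of the free case.
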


\begin{remark} {\ }
\begin{enumerate}\itemsep0ex
\item We need the \emph{polynomial}, not power series version of the cohomology of $BG$; cf.~\S\ref{resfp} below.
\item Calculation of the tensor product can be reduced to the maximal torus and Weyl group of~$G$; this reproduces  the \emph{quantum Martin} formulas 
of \cite[Theorem~1.6]{gw}.
\item Under the assumption that $QH^*$ is the Hochschild cohomology of (a good version of) the Fukaya category, 
Part~(i) goes back to \cite{telicm}. Without that assumption, the \emph{twisted sector} construction was described 
in the IAS lecture \cite[iii]{telaus}. Its analogue for the \emph{symplectic cohomology} of a linear $G$-representation $X$ was predicted 
in \cite{telc}, and recently proved in detail in \cite{gmp}. That example is relevant to the Batyrev formula and its non-abelian 
generalizations.  
\item The Lagrangian correspondence in Part~(ii) appears in the work of Wehrheim and Woodward \cite{ww} in the monotone case, and 
was further studied by Fukaya \cite{f} more generally. Woodward \cite{w} also defined an $A_\infty$ \emph{quantum Kirwan map} 
$QH^*_G \to H^*(X/\!/G)$. We expect the latter to factor through our isomorphism, but the difference in methods makes this 
not so obvious. 
\item Assuming good properties of equivariant Floer homology, 
the correspondence inducing the isomorphism provides a chain-level lift, with the appropriate algebra and commutation ($E_2$) 
structures. All we prove in this paper is the additive statement, and we rely on the correspondence for the rest of the structure.
\end{enumerate}
\end{remark}

\begin{remark}[Gauge theory and boundary conditions]
The variety $\mathrm{Spec}\, H_*^G(\Omega G)$ is the (fiberwise group completion 
of the) classical Toda integrable system. 
The tensor product in Theorem~\ref{main}.iii is the restriction of the algebra $QH^*(X/\!/G)$ to the unit section. 
The holomorphic Lagrangian geometry of the Toda group scheme controls boundary conditions for $3$-dimensional $A$-model 
gauge theory \cite{telaus, telicm}. The unit section corresponds to the unit (Neumann) boundary condition, and 
its intersection with $QH^*_C(X)$ computes the space of states for the gauge-invariant part of the $A$-model TQFT; this is expected to be defined by (a good version of) the $G$-invariant Fukaya catgory $\cF(X)^G$, when the latter has good properties  (cf.~Conjecture~\ref{fconj} and the remarks that follow).  
\end{remark}

Monotonicity of $X$ and the anti-canonical linearization are essential for Theorem~\ref{main}. Nonetheless, 
our conclusions survive in part even without those assumptions. This is clearest when  $X$ is monotone, but the $G$-linearization 
is shifted, and we still work over the ground ring $\bQ[q^{\pm}]$. When $X$ itself is not monotone, we must replace 
that by a Novikov ground ring from the outset (cf.~Remark~\ref{conv} below). Most importantly, results for non-monotone $X$ are \emph{conditional} on a good behavior of 
equivariant Floer theory; see for instance  \cite[Assumption 1]{xiao}. In both cases, a \emph{bulk deformation 
class} appears in the multiplicative structures  \cite{f, f2, xiao}. 

\begin{maintheorem}\label{extra}
Without the monotone assumption in Theorem~\ref{main}, we have an exact sequence
\[
 0 \xrightarrow{\ }  tHF^*_{LG} \xrightarrow{\ } QH^*_{LG}(X) \xrightarrow{\ } A^*(X/\!/G) \xrightarrow{\ } 0,
\]
where $tHF^*_{LG}(X)$ is the ideal of gauged $LG$-equivariant \emph{trapped Floer cohomology} (cf.~\S\ref{trapcoh} below). 
After possibly passing to a suitable Novikov ring, $A^*$ admits a bulk deformation to  $QH^*(X/\!/G)$.
\end{maintheorem}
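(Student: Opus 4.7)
The plan is to build the exact sequence at the Floer-chain level, extracting the trapped ideal as the classes \emph{captured} by the higher critical sets of $\|\mu\|^2$ under continuation, and identifying the quotient with the part of the complex that flows down to $\mu^{-1}(0)$; this quotient will reconstruct $QH^*(X/\!/G)$ up to a bulk deformation after a suitable Novikov completion.

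First I would set up an $LG$-equivariant Floer complex $CF^*_{LG}(X)$ over a Novikov ring, via the family Floer construction over $BG$ alluded to in \S\ref{topology}, conditional on the equivariant virtual fundamental classes invoked in the \emph{Floer techniques} paragraph. As Hamiltonians I would take $H_K = K\|\mu\|^2$ with small Morse--Bott perturbations and push $K\to\infty$, so that Floer trajectories concentrate near the Kirwan--Ness--Hesselink critical manifolds of $\|\mu\|^2$. This organizes $CF^*_{LG}(X)$ by critical value, and I define the trapped subcomplex $tCF^*_{LG}(X)$ as those classes which, in the $K\to\infty$ limit, remain supported strictly above the minimum stratum $\mu^{-1}(0)$. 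The subcomplex property follows from continuation-invariance of the differential.

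Next I would verify that $tHF^*_{LG}(X)$ is a two-sided ideal in $QH^*_{LG}(X)$. This rests on a maximum-principle estimate for $\|\mu\|^2$ along pair-of-pants solutions with Hamiltonian $H_K$: the value of $\|\mu\|^2$ cannot drop across quantum multiplication by an arbitrary class, so the product preserves the filtration by critical stratum and in particular keeps trapped classes trapped. The quotient $A^*(X/\!/G)$ is then, tautologically, the part of the Floer complex that escapes to the stable stratum. A local Morse--Bott analysis near $\mu^{-1}(0)$, paralleling the proof of Theorem~\ref{main}, identifies $A^*(X/\!/G)$ additively with a Novikov version of $H^*(X/\!/G)$: the $LG$-equivariance collapses the loop direction and recovers $G$-equivariant cohomology of $\mu^{-1}(0)$, which in the locally free case is the cohomology of the quotient.

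For the multiplicative statement I would invoke the Wehrheim--Woodward--Fukaya--Xiao framework \cite{ww, f, f2, xiao}: the Lagrangian correspondence $\mu^{-1}(0)\subset X\times (X/\!/G)^-$ intertwines the quantum products up to a bulk class supported on the obstruction disks of the correspondence, which no longer vanish once monotonicity is dropped. After enlarging the Novikov ring to absorb the relevant disk counts, this bulk class deforms the primitive product on $A^*$ to the genuine quantum product on $QH^*(X/\!/G)$. The hardest step is the well-definedness of $tCF^*_{LG}$, which requires a robust equivariant Floer theory with virtual fundamental classes; this is the conditional ingredient flagged in the \emph{Floer techniques} paragraph. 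A secondary subtlety, also noted there, is the choice of Novikov ring: formal completion adically disconnects the spectrum of the Seidel shift operators and hides the trapped part at $0$ or $\infty$, so one must work in a polynomial or only partially completed Laurent setting fine enough to keep the trapped eigenvalues visible.
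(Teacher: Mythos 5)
Your overall shape — a trapped subcomplex by the behavior of orbits under $K\to\infty$ continuation, an ideal property from an energy-type estimate, a quotient complex concentrated near $\mu^{-1}(0)$, and a bulk deformation on the multiplicative side via Wehrheim--Woodward--Fukaya--Xiao --- matches the paper. But two steps as written have genuine gaps.

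First, the ideal property. You invoke a ``maximum-principle estimate for $\|\mu\|^2$ along pair-of-pants solutions.'' No such maximum principle holds in a general compact symplectic manifold: pair-of-pants solutions can and do cross level sets of $\|\mu\|^2$. The correct tool, which the paper uses, is the \emph{a priori energy estimate} controlling the change in the \emph{symplectic action} $\cA$ under Floer differentials and products; the formula $\cA(O_x) = -\tfrac{K}{2}\|\mu\|^2(x)$ (see \S\ref{floermix}.e) then translates the action estimate into the needed monotonicity of the critical-value filtration. Your claim is morally an action estimate in disguise, but stated as a maximum principle for $\|\mu\|^2$ it is not a theorem. The same issue affects your subcomplex claim, for which ``continuation-invariance of the differential'' is not really an argument (the differential is not literally invariant under continuation, only the quasi-isomorphism type); again, it is the action estimate that forces $d$ to preserve the trapped piece.

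Second, and more substantively, you pass from the short exact sequence of chain complexes directly to an additive identification of the quotient cohomology with $A^*(X/\!/G)$, and you never address why the resulting \emph{long} exact sequence on cohomology splits into short exact sequences. A short exact sequence of complexes gives only a long exact sequence in cohomology; to get the asserted
$0 \to tHF^*_{LG} \to QH^*_{LG}(X) \to A^*(X/\!/G) \to 0$
one must rule out the connecting homomorphism, i.e.\ show there are no differentials from the quotient part landing in trapped orbits. The paper does this by observing that the constant orbits in $\mu^{-1}(0)$ are already present in the small-$K$ Floer complex (where they support no such differentials), propagating this to all orbits using the shift operators of $H_*^G(\Omega G)$, and matching the twisted sectors of $I(X/\!/G)$ via the twisted Lagrangian correspondence of \S\ref{twistedcor}. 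None of that appears in your proposal; without it the middle and left terms of your sequence are not the right ones, and in particular the orbifold/inertia-stack bookkeeping in the locally free case is entirely absent.
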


We do not know explicit multiplicative splittings of this sequence, although specific ones are found in important 
examples, such as in Iritani's work on the blow-up formula \cite{i}. The deformation class from $A^*$ to $QH^*(X/\!/G)$, 
which requires passing a Novikov ground ring if we started out over $\bQ[q^{\pm}]$, also seems difficult to compute from geometric data. 
However, without deformation or change in ground ring, we have:

\begin{maintheorem}\label{supports}
The quotient algebra $A^*(X/\!/G)$ in Theorem~\ref{extra} can be identified with the $LG$-equivariant symplectic cohomology of $X$ 
with supports on $\mu^{-1}(0)$.
\end{maintheorem}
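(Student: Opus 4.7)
The plan is to realize both sides of the claimed identification as colimits of a single family of $LG$-equivariant Floer groups, and then match the two colimits by a cofinality argument. The key family consists of the Floer complexes $HF^*_{LG}(X;H_K)$ for a sequence of $G$-invariant Hamiltonians $H_K$ which approximate $K\|\mu\|^2$ for $K\to\infty$, modified so that $H_K$ vanishes on a shrinking tubular neighborhood $U_K$ of $\mu^{-1}(0)$ and agrees with $K\|\mu\|^2$ outside a slightly larger neighborhood. These Hamiltonians are the common engine driving both the trapping construction of Theorem~\ref{extra} and the Varolgunes symplectic cohomology with supports.

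First, I would recall from the construction underlying Theorem~\ref{extra} that $tHF^*_{LG}(X)$ is defined as the subcomplex of $LG$-equivariant Floer classes whose representatives, under Floer continuation by $K\|\mu\|^2$, remain concentrated over the positive-$\|\mu\|^2$ Morse--Bott strata as $K\to\infty$. Consequently, the quotient $A^*(X/\!/G)$ is computed by the colimit of the continuation images of $QH^*_{LG}(X)$ into $HF^*_{LG}(X;H_K)$, with contributions from the higher critical sets systematically removed in the limit. The essential geometric input here is that the $\|\mu\|^2$-flow retracts the lowest Morse stratum equivariantly onto $\mu^{-1}(0)$, so the surviving orbits in the limit are exactly those localized in arbitrarily small neighborhoods of $\mu^{-1}(0)$.

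Next, I would identify this colimit with the symplectic cohomology with supports on $\mu^{-1}(0)$ in the sense of \cite{var1, var2}, which by definition is a colimit of Floer cohomologies for Hamiltonians vanishing on a shrinking neighborhood of $\mu^{-1}(0)$ and growing outside. The family $\{H_K\}$ is cofinal in the class of such Hamiltonians, by a standard monotone-homotopy argument, so both colimits compute the same object. Equipping every stage of the construction with the diagonal $LG$-action (loop rotation combined with the Hamiltonian $G$-action) and passing to the equivariant colimit gives the $LG$-equivariant version; since each $H_K$ is $G$-invariant and the modification near $\mu^{-1}(0)$ can be done $G$-equivariantly, the equivariant structures are preserved throughout.

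The main obstacle will be the careful compatibility of $LG$-equivariance with the continuation and excision steps: loop rotation must commute with the cutoffs used to localize near $\mu^{-1}(0)$, and the higher Morse--Bott strata of $\|\mu\|^2$ must be handled equivariantly (in the non-abelian case this requires Kirwan's equivariant perfection lifted to the Floer setting). A subsidiary technical point is that, under the Novikov conventions flagged in Remark~\ref{conv}, one must verify that the colimits on both sides are taken before any Novikov completion, so that the equality holds strictly rather than only after completion; this matches the stated avoidance of a formal Novikov base in the identification of $A^*(X/\!/G)$ as an undeformed object.
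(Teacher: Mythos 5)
The paper itself contains no proof of Theorem~\ref{supports}: the text explicitly postpones it to the follow-up paper \cite{pt} ("the statement for now functions as an announcement"), and after Theorem~\ref{techtheorem} it only remarks that the workaround for the ill-defined $K=\infty$ complex will be a $K$-dependent \emph{action truncation} of $\Phi_X$, with the limiting complex realized as the $K\to\infty$ colimit of these truncations. So your proposal is being compared to sketches and hints, not to a written proof.

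That said, your outline is in the right neighborhood but has two substantive gaps. First, you take for granted that $A^*(X/\!/G)$ "is computed by the colimit of the continuation images of $QH^*_{LG}(X)$ into $HF^*_{LG}(X;H_K)$." In the paper, $A^*$ is defined as a \emph{quotient} of $QH^*_{LG}(X)$ by the trapped ideal, which in turn is characterized by the \emph{a priori} energy (action) estimate: the trapped classes are those whose action diverges with $K$. Identifying that quotient with a colimit over a shrinking family of Hamiltonians is precisely the content that needs proving, and the mechanism the authors indicate (action truncation of a single family $K\|\mu\|^2$) is not the same as your mechanism (a cofinal family $H_K$ with a built-in flat plateau near $\mu^{-1}(0)$). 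One expects these to agree, but the monotone-homotopy/cofinality argument connecting them is a genuine step, not a formality, because action truncation cuts along a filtration internal to one complex whereas your modification changes the space of orbits at each stage. Second, in Varolgunes' framework the relative symplectic cohomology involves a \emph{completed} colimit over the Novikov ring, and the paper's Remark~\ref{conv} makes clear that completion can annihilate the very trapped part whose removal you are trying to encode. You flag this, but stating "one must verify that the colimits are taken before any Novikov completion" is naming the problem rather than solving it; you would need to argue that the particular $LG$-equivariant family $\{H_K\}$ has bounded enough torsion (or is defined over a convergent Novikov ring, as the paper hopes) so that the uncompleted and completed colimits agree on the quotient. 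Until Part II appears there is no way to check whether your route coincides with theirs, but as written the proposal leaves these two steps as assertions.
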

\noindent
Symplectic cohomology with support was introduced by Varolgunes' \cite{var1}. When $X$ is not monotone, defining its the correct $LG$-equivariant version for this theorem also 
presupposes a working equivariant Floer theory. In any case, further discussion of Theorem~\ref{supports} will await \cite{pt}. 

\subsection{Trapped cohomology.}\label{trapcoh}
The presence of a kernel $tHF^*_{LG}$ has long been known in toric, non-Fano mirror symmetry formulas; here, we provide the 
geometric explanation. Part of the complex gets trapped at critical points under \emph{Floer continuation} by the Hamiltonian 
$K\|\mu\|^2$, $K\to \infty$. This entrapment is unsurprising, being intrinsic to Morse theory; more surprising is the 
`clearing' of certain critical points by the Floer complex (see the ``movie'' of \S\ref{movie}).  

The underlying reason is remarkably similar to the `geometric quantization commutes with reduction' argument \cite{telq}. In 
that case, $G$-invariant Dolbeault cohomology with supports on the unstable strata was ruled out by the action of certain 
\emph{Hilbert-Mumford destabilizing circles}. In our symplectic situation, the shift operators of those circles 
play a decisive role: in the $G$-monotone case, all $G$-equivariant Floer cohomology is extracted out of the 
critical points by Floer continuation under the same subgroups, and the trapped part can 
be described by shift operators: the downward shifts that should extract the cohomology into the stable locus keep it in place 
instead. 

Thus, $tHF^*_{LG}$ carries more structure, being filtered by  unstable Morse strata. 
Trapping are those strata on which the Hilbert-Mumford destabilizing circle acts with non-positive  
weights on the fiber of $\det(TX)$. In work going forward, we plan to study this analysis of Floer 
continuation through critical points. However, the monotone case allows for a robust simplification of the 
argument, which we shall use in this paper. 

\begin{remark}[Formality and convergence]\label{conv}
The Novikov ring includes power series in a variable $v$, 
raised to the (negative) symplectic areas of curves. The most general setting of Floer theory uses formal series. If we can 
work instead with \emph{convergent series}, the trapped ideal is typically non-zero, the 
extreme case being that of empty quotients. However, formal completion may annull it: the spectrum of the shift operators 
becomes adically separated in $v$, according to the chamber structure of the moment map, which controls their 
classical degeneration. A choice of reduction chamber will discard the associated $tHF^*_{LG}(X)$. This reveals the limitations of  
working with a formal ground ring: the quotient $A^*(X/\!/G)$ ``jumps inexplicably'' when varying the moment reduction 
parameter, whereas the gauged theory $QH^*_{LG}(X)$ should vary analytically---as confirmed in numerous mirror symmetry 
calculations.  

We expect that an \emph{additive} $LG$-equivariant Floer complex of $X$ can always be defined over a convergent version 
of the Novikov ring, but arguments for that presuppose the use of virtual fundamental classes. An analogue for the multiplicative structures seems not known.
\end{remark}

\subsection{Contents of the paper.}
In the following sections, we spell out the ingredients and sketch the proof of Theorems~\ref{main} and \ref{extra}.i. The 
methods are Floer-theoretic, but our summary here will use, without comment, 
equivariant Floer complexes as if the $G$-action was strict everywhere and the Hamiltonians $K\|\mu\|^2$ were  
Morse-Bott on the Floer complex. The companion paper \cite{pt} will spell out the proper workaround, in terms of family Floer 
complexes and their action filtration. In the non-abelian case, the most important changes arise when 
abandoning $G$-monotonicity; that will be discussed in a separate paper.     

%%%%%%%%%%%%%%%%%%%%%%%%%%%%%%%%%%%%%%%%%%%%%%%%%%%%%%%%%%%%
\section{Topological (loop) group actions}\label{topology}
We describe the topological origin of the algebraic structures in Theorem~\ref{main}. The notion of \emph{restricted homotopy 
fixed points} sets the stage for equivariant Floer calculus in the monotone case, avoiding strict equivariance. 
On the way, we spell out the argument for Theorem~\ref{main}.i.  

\subsection{Loop group action on the Floer complex.}\label{lgacts} 
The Floer complex of $X$ is a model for semi-infinite chains on the free loop space $LX$. With complex coefficients, we can vary 
the connection by classes transgressed from $H^2(X;\bC^\times)$, the parameter space for small quantum cohomology. The grading 
has an additive ambiguity of $2\min\langle c_1 | H_2(X)\rangle$, which can be resoved by working over the Laurent polynomial ring in a 
variable $q$ of degree $2$. Valuing the quantum parameter more broadly in $H^2(X;\bC^\times)$ incorporates a varying choice 
of coefficients: the flat complex line bundles over $LX$, which are $(E_2)$ multiplicative for the pair-of-pants product. 
And indeed: 

\begin{remark}[Orientation]
The Floer complex takes values naturally in the orientation bundle $o_{LX}$ of $LX$. As explained in \cite{at}, the latter is 
classified by the trangression to $H^1(LX;\bZ/2)$ of the Stiefel-Whitney class $w_2(X)$. The latter represents the $2$-torsion 
point $\exp(\pi i c_1)$ in the identity component of $H^2(X;\bC^\times)$. As a \emph{complex} line bundle, $o_{LX}$ is 
trivial, and its connection can conceal the twisting contribution to Floer differentials. When $w_2\neq 0$, odd Deck-transformations  are orientation-reversing, which can give rise to unexpected signs in the Floer complex.  
\end{remark}

An action of $G$ on $X$ should define a topological action of $LG$ on the Floer complex, leading to the definition of the $LG$-
equivariant quantum cohomology $QH_{LG}^*(X)$, and it does:

\begin{lemma} \label{action} Let $G$ be connected.
\begin{enumerate}\itemsep0ex
\item A topological action of $LG$ on a co-chain complex $F$ is equivalent, up to $G$-equivariant homotopy, to a (derived) local 
system $\widetilde{F}$ over $G$, with fiber quasi-isomorphic to $F$, together with equivariance data for $G$-conjugation.
\item This in turn is equivalent to a $G$-action on $F$, plus a $G$-equivariant action of $\Omega G$ on $F$.
\item Equivariance under loop rotations of $LG$ and a matching circle action on $F$ leads to circle-equivariance in (i) and (ii).  
\end{enumerate}
\end{lemma}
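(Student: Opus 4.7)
The plan is to reduce everything to the standard dictionary: up to coherent homotopy, a topological action of a topological group $H$ on a cochain complex $F$ is the same data as a (derived) local system on $BH$ whose fiber is quasi-isomorphic to $F$. Concretely, this amounts to a simplicial map from the bar construction $BH_\bullet$ to the $\infty$-groupoid of cochain complexes, sending the vertex to $F$; I would formalize all three claims in those terms.

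For (i), the key input is the identification $BLG \simeq L(BG)$: a principal $LG$-bundle on $X$ is the same as a principal $G$-bundle on $X\times S^1$ via the clutching construction, so $[X,BLG]=[X\times S^1,BG]=[X,L(BG)]$. The free loop space $L(BG)$ is the inertia stack $[G/G]^{\mathrm{ad}}$ of $BG$, since points of $L(BG)$ are $G$-bundles on $S^1$, which are parametrized by elements of $G$ up to conjugation. A local system on $[G/G]^{\mathrm{ad}}$ is, by descent, a local system $\widetilde{F}$ on $G$ equipped with $G$-equivariance data for conjugation, which is exactly the data in (i).

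For (ii), I would exhibit $LG$ as a split extension $1\to \Omega G \to LG \to G \to 1$ via evaluation at the basepoint and the inclusion of constant loops, giving a semidirect product $LG\cong \Omega G\rtimes G$; accordingly, an $LG$-action decomposes as a $G$-action together with a $G$-equivariant $\Omega G$-action, with $G$ acting on $\Omega G$ by conjugation. The compatibility with (i) uses the classical identification $B\Omega G \simeq G$ (from the path-loop fibration $\Omega G \to PG \to G$), which turns $\Omega G$-actions into local systems on $G$. For (iii), loop rotation enlarges the group to a semidirect product $S^1\ltimes LG$; since rotation fixes conjugacy classes in $LG$, it acts trivially on the adjoint quotient and hence trivially on the base $G$ of $\widetilde{F}$. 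A matching $S^1$-action on $F$ therefore refines $\widetilde{F}$ to an $S^1$-equivariant local system on $[G/G]^{\mathrm{ad}}$, which is the same as enriching the $\Omega G$-action in (ii) with $S^1$-equivariance for loop rotation.

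The main obstacle is giving precise meaning to ``action up to $G$-equivariant homotopy'' at the chain level, since Floer data is only coherently (not strictly) $LG$-equivariant. I would handle this by working throughout in the $\infty$-category of cochain complexes, where the three identifications above (principal bundle clutching, bar resolution of a split extension, and equivariance for a fixing action) become genuine equivalences of mapping spaces, making the three formulations of the lemma tautologically equivalent. The family Floer model over $BG$ announced in Section~\ref{topology} and \cite{pt} provides the concrete avatar of this $\infty$-categorical setup that is actually used in applications.
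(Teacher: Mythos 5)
Your proposal follows essentially the same route as the paper's proof: identifying $BLG$ with the adjoint quotient $G/_{\mathrm{Ad}}G$ (you via clutching and the inertia stack of $BG$, the paper via $G$-connections on the trivial bundle over $S^1$—the same fact), decomposing $LG\cong G\ltimes \Omega G$ and invoking $B\Omega G\simeq G$ for (ii), and observing that loop rotation is compatible with these identifications for (iii). The two write-ups differ only in phrasing, not in substance.
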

\begin{proof}
Part~(i) holds because the quotient stack $G/_{\mathrm{Ad}}\,G$ is a model for the classifying space of $LG$, as can be seen by the 
former's presentation as the stack of $G$-connections on the trivial bundle over the circle. Part~(ii) just spells out semi-
direct decomposision $LG\cong G\ltimes \Omega G$; it is also, of course, related to (i) via the monodromy representation with base 
point the unit in $G$.

Part (iii)  is now clear, except perhaps for spelling out the circle action in (ii): it comes from equating 
$\Omega G/_{\mathrm{Ad}} G = G\setminus LG/ G$. The circle 
actions are the natural ones. \end{proof}

The requisite $LG$ action on the Floer complex of $X$ comes from the local system over $G$ whose fiber at $g\in G$ is the Floer 
chain complex $CF^*(X;g)$ of the symplectomorphism defined by $g\in G$. This is equivariant for conjugation, with the fiber at 
$g$ carrying the action of the centralizer $Z(g)$. The local system structure---or, equivalently, the acion of $\Omega G$---comes form interpreting paths in $G$ as Hamiltonian isotopies. 

\begin{lemma}\label{derived}
The equivariant homology of $G$ with $\widetilde{F}$ coefficients is given by the (left derived) tensor product 
\[
C^{\,G}_*(F) \otimes^L _{C^{\,G}_*(\Omega G)} C_*^{\,G}
\]
\end{lemma}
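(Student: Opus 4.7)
The approach is to reinterpret both sides via the identification $BLG\simeq G/_{\mathrm{Ad}}G$ from the proof of Lemma~\ref{action}(i). Under this, the $G$-equivariant local system $\widetilde F$ on $G$ corresponds to the $LG$-module $F$, and the left-hand side becomes the homology of the Borel construction $F_{hLG}$, i.e.\ the $LG$-equivariant homology of a point with coefficients in $F$.

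The plan is then to exploit the split short exact sequence $1\to\Omega G \to LG \to G\to 1$ of Lemma~\ref{action}(ii). Because $\Omega G$ is normal, the $LG$-Borel construction factors as an iterated Borel construction $F_{hLG}\simeq (F_{h\Omega G})_{hG}$. First I would take $\Omega G$-homotopy coinvariants, obtaining the two-sided bar complex $F\otimes^L_{C_*(\Omega G)} k$, which retains a natural $G$-action compatibly with the conjugation action of $G$ on $\Omega G$ and with the assumed $G$-equivariance of the $\Omega G$-action on $F$. Next I would pass to $G$-equivariant chains; since $C^{\,G}_*(-)$ commutes with (derived) colimits, and in particular with the bar construction, this yields
\[
C^{\,G}_*(F)\otimes^L_{C^{\,G}_*(\Omega G)} C^{\,G}_*(k),
\]
and identifying $C^{\,G}_*(k)=C^{\,G}_*(\mathrm{pt})$ with the author's $C^{\,G}_*$, a model for $C_*(BG)$, recovers the claim.

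The main obstacle is chain-level rigor: the topological actions of $G$ and $\Omega G$ are only coherent up to higher homotopy, so $C_*(\Omega G)$ is merely an $A_\infty$-algebra under Pontryagin product and $F$ is an $A_\infty$-module over it. The derived tensor product in the statement is precisely what absorbs this slack; concretely, one chooses cofibrant $A_\infty$-models for $C^{\,G}_*(\Omega G)$ and $C^{\,G}_*(F)$, and then verifies that passing to $G$-equivariant chains preserves the two-sided bar resolution. With the complex coefficients already adopted one may invoke formality of $C_*(\Omega G)$ to simplify the bookkeeping, but this is not essential. Both the splitting of the $LG$-Borel construction across the normal subgroup $\Omega G$ and the compatibility of $C^{\,G}_*$ with homotopy coends are standard, so no new machinery is required, only a careful spelling out in the $A_\infty$ setting.
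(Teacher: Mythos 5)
Your proposal is correct and is essentially the same argument the paper gives: the paper's one-line proof (``identify homology over $G$ with co-invariants of monodromy, fiberwise over $BG$'') is precisely your factorization of the $LG$-Borel construction as $(F_{h\Omega G})_{hG}$, since coinvariants of monodromy on the local system over $G=B\Omega G$ are the $\Omega G$-homotopy coinvariants, and ``fiberwise over $BG$'' is the outer $G$-Borel stage. Your extra care about $A_\infty$-coherence and about $C^{\,G}_*$ preserving the bar resolution (which also needs lax-monoidality of $C^{\,G}_*$ over $C_*(BG)$, not just commutation with colimits) is a welcome unpacking of what the paper leaves implicit.
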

\begin{proof}
We just identify homology over $G$ with co-invariants of monodromy, fiberwise over $BG$. 
\end{proof}

\begin{remark}
The Floer complex has a natural $E_2$-multiplicative structure form the \emph{pair of pants product}. 
This is compatible 
with the Pontryagin product on $G$, and replicated on $H_*^G\large(G;\widetilde{CF}{}^*\large)$ by considering the moduli of flat bundles on the pants with boundary monodromies $g, h, gh$. The tensor product in Lemma~\ref{fixedpoint} above involves 
two $E_2$ algebras over a central $E_3$-algebra, and the  algebraic and geometric $E_2$ structures match. Following an insight of 
Kontsevich and Costello, the circle action by loop rotation on the Floer complex of a compact symplectic manifold is trivialized 
by an extension of Gromov-Witten operations to the boundary of the genus-zero moduli space~$M_0^n$;  this also trivializes the bracket part of the $E_2$ structure. We do not 
know a reference for the equivariant version; it does follow by fibering $X$ over $BG$---in effect, from the exisence of $G$-equivariant Gromov-Witten theory.  
\end{remark} 

\subsection{Restricted homotopy fixed points.} \label{resfp}
The homotopy $G$-invariant part of a $G$-complex $F$, or of the $F$-valued chains in $G$ as in Lemma~\ref
{action}, is its derived space of sections over $BG$. It is a module over 
$C^*(BG)$, and its cohomology is complete at the maximal ideal. We will need the \emph{polynomial} version 
instead (sometimes, misleadingly, called the equivariant \emph{homology} $H^G_*$). 
The distinction is meaningful, because of the \emph{a priori} grading collapse in the Floer complex. 

A general $\mathbf{R}\Gamma(BG;F)$ need not have preferred uncompletions: but this will be the case if $F$ 
is cohomologically bounded below. Intuitively, this is obvious: only a finite-dimensional part of $BG$ 
contributes to $H^*(BG;F)$ in a fixed degree, and the graded direct sum is the prefered uncompletion. 
More precisely, we present such an $F$ as a colimit of the Postnikov fibers, subcomplexes $\tau^{\le n}F$, 
vanishing above degree $n$ and agreeing with $F$ below; these are defined  
up to quasi-isomorphism over $BG$, and are finite $BG$-extensions of local systems. 

\begin{definition} \label{fixedpoint}
If $F$ is cohomologically bounded below, the \emph{restricted homotopy invariant complex} is the colimit of the 
$\mathbf{R}\Gamma(BG; \tau^{\le n}F)$. For a colimit of bounded-below complexes, the homotopy invariant 
complex is the corresponding colimit.   
\end{definition}

\begin{remark}
The colimit presentation is needed as part of the data. In geometric situations, a preferred one is available: from 
a \emph{strict} $G$-action on a space $S$ (even infinite-dimensional), we choose a $G$-$CW$-structure, uniquely 
up to $G$-homotopy equivalence (the equivariant $CW$ 
approximation theorem). The colimit of the homotopy fixed-point sets of the (chains on the) finite-dimensional skeleta then defines a restricted fixed-point set for chains on $S$.
\end{remark}

\subsection{Filtrations.} Floer cohomology involves the addition of a grading variable $q$ of degree $2$, and 
completion with respect to a Novikov parameter $v$ representing the exponentiated area of curves. In the monotone case, 
$v$ is not needed, and the Floer complex is defined over the Laurent polynomial ring $\bC[q^{\pm}]$. 

%%%%%%%%%%%%%%%%%%%%%%%%%%%%%%%%%%%%%%%%%%%%%%%%%%%%%%%%%%%%%%%%%%%%%%%%%%%%%
\section{Summary of the proofs of Theorems 1 and 2}
The ideal proof would present $X$ as a principal $LG$-bundle over $X/\!/G$. While there is no such \emph{geometric} fibration, 
the Floer complex~$\Phi_X$ of $X$,  built from periodic orbits of the Hamiltonian $H:=\frac{1}{2}K\|\mu\|^2$, will provide a 
substitute. Specifically, its $K\to \infty$ limit will land in a neighborhood $N\supset \mu^{-1}(0)$, identified 
symplectomorphically with the $T^*G$-disk bundle over $X/\!/G$ by the \emph{normal form theorem} \cite{gs1}. The 
desired fiber $LG$ will materialize homologically (in an associated  graded complex) as the 
string topology, sor symplectic cohomology, of $T^*G$. (In the case of orbifold quotients, the complex will fiber instead 
over the inertia stack $I(X/\!/G)$ see \cite{chenruan} for its  definition.)

\subsection{Movie: Floer continuation.} \label{movie}
Before proceeding with the outline, we give the heuristics. Let us `turn on' $H =\frac{1}{2}K\|\mu\|^2$ and increase $K$.  
Floer continuation under growing~$K$ flows the periodic Hamiltonian orbits downward along the gradient. In an unstable 
$\|\mu\|^2$-Morse stratum, orbits flow initially to the critical set, accruing (co)homology there. \emph{In the $G$-monotone 
case}, all cohomology then exits again (in a broken flow) along the downward gradient, eventually escaping into the stable 
Morse stratum $X^s$. This exit is enabled by the positive weights on the canoncial bundle of the Hilbert-Mumford destabilizing 
circles. Once in $X^s$, the orbits are drawn into the normal neighborhood $N \supset \mu^{-1}(0)$, and our earlier 
description of $\Phi_X$  takes over. 

This movie is only a guide: continuation need not define continuous maps between orbit spaces. Our argument relies 
instead on a topological estimate for the \emph{monotone index}. For non-abelian $G$, will require the stronger assumption 
of \emph{$G$-monotonicity} of $X$ \eqref{supermonotone}. The movie becomes more relevant away from the $G$-monotone case, 
when some cohomology remains \emph{trapped} at critical loci. 

\subsection{Executive summary: derived additive statement.}\label{additive}
We will define a filtration of $\Phi_X$ compatible with powers of the grading variable $q$. Once in $N$, the associated graded 
complex will fiber over $X/\!/G$, with fiber $SH^*(T^*G)$, a degree-shifted copy of $H_*(LG)$. On the $G$-equivariant complex, 
the induced spectral sequence will 
be shown to collapse at $E_1$. In the free case, this can be seen (with rational coefficients) from Kirwan' surjectivity statement on classical cohomology. On the $LG$-equivariant complex, an isomorphism with the base can be seen integrally, 
by invoking the section of constant orbits $\mu^{-1}(0)/G$.

Locally free actions need an additional check, as we must identify $QH^*_{LG}(X)$ (additively) with the 
cohomology of the inertia stack $I(X/\!/G)$. The twisted sectors of the latter do not come from 
$\mu^{-1}(0)/G$; but neither, for that matter, do the twisted components of $\Phi_X$, consisting of Hamiltonian orbits covered by \emph{fractional} circles in $G$. The replacement for the 
constant section, exhibiting our additive isomorphism, come instead from \emph{$g$-twisted} Floer 
complexes of $X$, with group elements $g$ that occur as stabilizers in $\mu^{-1}(0)$. 
Quasi-isomorphy (now with $\bQ$ coefficients) can be deduced from classical $K$-theoretic Kirwan surjectivity \cite{harl}.

%Either approach confirms the \emph{additive} isomorphism in \eqref{loopgp}.

\begin{remark}[Important fine print]
We do \emph{not} filter $\Phi_X$ by $q$-shifts of the classical lattice $H^*_G(X)[q]
\subset H^*_G(X)[q,q^{-1}]$; rather, we will use the \emph {normalized Floer degrees} of orbits in \S\ref{floermix} below.  
\end{remark}

\subsection{Multiplication.} \label{twistedcor}
In the case of free quotients, we see the ring structure from the Lagrangian correspondence 
$\mu^{-1}(0)\subset X\times X/\!/G$ investigated by Wehrheim and Woodward \cite{ww}, and 
equivariantly by Fukaya \emph{et al.}~\cite{f, f2, xiao}. To see its compatibility with our additive isomorphism, we include the correspondence 
equivariantly in the fibration over the group manifold $G$ and deform it by $\frac{1}{2}K\|\mu\|^2$. 
The $G$-equivariant Floer self-homology of the Lagrangian is identified with that of $X/\!/G$. 
As it is also identified with the $LG$-equivariant Floer homology of~$X$, the bimodule it induces for the two sides 
defines the same quasi-isomorphism as the constant section.

In the orbifold case, we relate an inertia stack sector twisted 
by $g\in G$ with an analogous correspondence placed over the 
twisting group elements of $G$: $\mu^{-1}(0)$ is replaced by the moment pre-image of the smallest $g$-fractional orbits.  In this case, however, the correspondence is known to involve a deformation between the two sides.

\subsection{Formality.}
Strictness of the tensor product \eqref{formal} requires a closer look at our $q$-filtration.
The $G$-equivariant complex $\Phi^G_X$ computes $QH^*_G(X)$, and 
higher Tor groups are ruled out by noting  that 
\begin{enumerate}\itemsep0ex
\item $QH^*_G(X)$ is a Cohen-Macaulay module over $H_*^G(G)$,
\item $\mathrm{Spec}\, QH^*_G(X)\cap \mathrm{Spec}\,H^*(BG)$ is a $0$-dimensional subscheme of 
$\mathrm{Spec}\, H_*^G(\Omega G)$.
\end{enumerate}
Claim (i) follows becase $QH^*_G(X)$ is finite and free, under projection to the submanifold $\mathrm{Spec}\,H^*(BG)$. For the
second claim, consider the fibration of $\Phi^G_X$ over the base $I(X/\!/G)$: its filtration spectral sequence collapses at $E_1$ 
to $H^*\left(I(X/\!/G); H_*(\Omega G)\right)$, with the constant section giving the obvious copy of the base therein. This 
$\Omega G$-fibration, associated to the adjoint action of $G$, is compatible with the $H_*^G(\Omega G)$-action. Universally, 
over $BG$, the respective Leray spectral sequence collapses rationally, so that the cohomology over 
$X/\!/G$ can also be split, and the $E_1$ term can be built from finitely many copies of $H_*(\Omega G)$ over the base $I(X/\!/G)$. 
This makes the intersection with $H^*(BG)$ in $H_*^G(\Omega G)$ zero-dimensional, as claimed.

\subsection{Floer ingredients.} \label{floermix}
To spell out the details in execution, we recall some ingrediets of a Hamiltonian Floer complex with Morse-Bott critial loci:

\begin{enumerate}[label = (\alph*)]\itemsep0ex
\item The Floer cohomology grading $\deg$ on an isolated periodic orbit is valued in the $\bZ$-torsor of 
trivalizations of $\det^{\otimes 2}(TX)$ along the parametrizing circle; $q$ shifts trivializations. 

\item On a space of orbits, the ordinary degree of (equivariant) cohomology classes is adjusted by (twice) 
the \emph{Conley-Zehnder shift}, the (floor of the) winding number of the 
Hamiltonian flow on $\det$ (relative to a chosen trivialization). 

\item Framing $TX$ by the action of the covering circle in $G$ normalizes this shift and gives each orbit a 
\emph{normalized Floer degree} $\mathrm{ndeg}$.\footnote{There is 
some ambiguity for fractionally covered orbits; but those fall into finitely many equivalence classes, 
wherein degrees can be integrally reconciled. This ambiguity accounts for the bounded constant C in the formula.}
In the the stable locus, and orbit with velocity vector a dominant $\xi\in \frt$ has
\[
\mathrm{ndeg}(\xi) = -\langle 4\rho | \xi\rangle + C
\] 
This is also the Floer degree in the symplectic cohomology $SH^*(T^*G)$, and the (negative) 
dimension of the Morse cell in $\Omega G$ attached to the closed geodesic $\xi$.

Integrally covered orbits in the unstable locus can be perturbed into the stable locus, when the latter is non-empty; so the 
formula extends to those as well. In the $G$-monotone case, the stable locus cannot be empty. 

\item The \emph{action} $\cA$ of a loop is the sum of the integral of $H$ and a real 
lift of the holonomy of $\cL$. It is valued in the torsor of topological trivializations of $\cL$ along the loop.

\item For a Floer orbit $O_x$, the action is a real lift of the return isomorphism for the Hamiltonian. The covering circle in $G$ can be used to (fractionallly) trivialize $\cL$, leading to the formula
\[
\cA(O_x) = H(x) - \langle\mu(x) | K\mu(x) \rangle 
= -\frac{K}{2} \|\mu\|^2(x)
\] 
having accounted for the velocity of the Hamiltonian flow $dH = \langle K\mu | d\mu\rangle$. 
For a more general Hamiltonian defined by a $G$-invariant function $F(\mu)$,  
\[
\cA = F\left(\mu\right) - \langle dF | \mu \rangle.
\]

\item When the symplectic class agrees with $c_1$, the following  \emph{monotone index} of a Floer orbit is  defined without additional choices, since the two terms are valued in the same torsor:
\[
\mathrm{mix} = \mathrm{Floer\ degree} - 2 \cA. 
\]
\item From (c)--(f) above we get the formulas
\[\begin{split}
\mathrm{mix} &= \mathrm{ndeg} + K\|\mu(x)\|^2 \quad\text{when } 
	H = \frac{1}{2} K\|\mu\|^2,\\
\mathrm{mix} &= \mathrm{ndeg} + 2\left(\langle dF | \mu \rangle- F\left(\mu\right)\right)
	\quad\text{in general.}
\end{split}
\]
\end{enumerate}

\begin{theorem}\label{techtheorem}
\begin{enumerate}\itemsep0ex
\item When $X$ is $G$-monotone, all of $CF^*_G(X)$ eventually reaches the normal neighborhood $N$, 
as $K\to \infty$.
\item For free actions, the section of constant orbits in the $LG$-equivariant Floer complex induces a quasi-isomoprhism with the Floer complex of the base $X/\!/G$.
\item For locally free actions, the twisted sectors of $I(X/\!/G)$ 
define quasi-isomorphic sections of the fractional sectors of $CF^*_{LG}(X)$.
\item Modulo positive $q$-powers with respect to normalized Floer degree (c), the limiting Floer complex agrees with the cohomology complex of $X/\!/G$ with coefficients $SH^*(T^*G) = H_{\dim G -*}(LG)$. 
\item In the presentation (iv), the monodromy $LG$-action (\S\ref{topology}) is the natural $LG$-action on $SH^*$.
\end{enumerate}     
\end{theorem}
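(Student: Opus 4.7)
The plan is to implement the heuristic movie of \S\ref{movie} in five coordinated steps that share a common scaffolding: filter the Floer complex $\Phi_X$ by a natural $K$-dependent index, use a monotone-index estimate to concentrate the complex on the neighborhood $N$ of $\mu^{-1}(0)$, and then invoke the Guillemin--Sternberg normal form $N \cong T^*G$-disk bundle over $X/\!/G$ to read off the fibration structure. Parts (i) and (iv) are estimates on the resulting filtration; (ii), (iii) extract the $LG$-equivariant content; (v) identifies the induced action.

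For part (i), I would filter $CF^*_G(X)$ by the monotone index $\mathrm{mix}$ of \S\ref{floermix}(f), exploiting the identity $\mathrm{mix} = \mathrm{ndeg} + K\|\mu\|^2$. For orbits in $N$, $\mu \approx 0$ and $\mathrm{mix}$ is essentially $\mathrm{ndeg}$, bounded independent of $K$. An orbit at a critical fixed-point locus of $\|\mu\|^2$ outside $\mu^{-1}(0)$ has, after Morse--Bott perturbation, velocity vector proportional to $K\mu(x)^\vee$; applying the normalized-degree formula (c), extended from the stable locus to these perturbed orbits, yields
\[
\mathrm{mix} \ = \ K\bigl(\|\mu(x)\|^2 - 4\langle\rho \,|\, \mu(x)^\vee\rangle\bigr) + O(1).
\]
The $G$-monotonicity condition \eqref{supermonotone}, applied with $\xi = \mu(x)^\vee$, is the precise strengthening of plain monotonicity needed (after the relevant normalization conventions) to make this leading coefficient positive at every critical fixed point outside $\mu^{-1}(0)$. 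Hence these orbits escape any fixed $\mathrm{mix}$-window as $K \to \infty$, contributing only high positive $q$-powers and disappearing from the limit complex.

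Parts (ii) and (iv) then follow from a Leray analysis inside $N$. The normal form identifies $H = \tfrac{K}{2}\|\mu\|^2$ fiberwise with the geodesic Hamiltonian on $T^*G$, whose Floer complex is $SH^*(T^*G) \cong H_{\dim G - *}(LG)$. Consequently, on the $K \to \infty$ associated graded with respect to normalized degree (which is precisely the $q$-filtration), $\Phi_X|_N$ becomes the Leray complex of $N \to X/\!/G$ with coefficients $SH^*(T^*G)$, proving (iv). For (ii), I would pass to the $LG$-equivariant level fiberwise: $SH^*(T^*G)$ is the regular $LG$-representation, so derived $LG$-invariants collapse it to scalars, and the Leray spectral sequence degenerates at $E_1$ into $H^*(X/\!/G)$. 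The section $\mu^{-1}(0)/G \hookrightarrow X/\!/G$ of constant orbits provides the explicit quasi-isomorphism, and classical Kirwan surjectivity confirms it. Part (iii) is the orbifold variant: over the inertia stack $I(X/\!/G)$, the analogous normal form organizes Hamiltonian orbits of $X$ traced by fractional circles according to the group element $g \in \Gamma$ in stabilizers $\Gamma \subset G$; $K$-theoretic Kirwan surjectivity \cite{harl} promotes the piecewise constant sections (within each fractional sector) to an additive quasi-isomorphism. For (v), the $LG$-action on the Floer complex is, by Lemma \ref{action}, given by the local system $g \mapsto CF^*(X;g)$ over $G$; restricting to $N$ and invoking the normal form, this local system becomes the natural local system of $SH^*(T^*G)$-classes under fiber rotation, which is exactly the $LG$-action on $H_*(LG)$ by left translation.

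The principal obstacle is the monotone-index estimate in part (i). The precise constants in the normalized Floer-degree formula and in the $G$-monotonicity bound must be reconciled through careful bookkeeping of the covering-circle framing, Conley--Zehnder conventions, and the metric identification $\frg \cong \frg^*$; this is precisely where non-abelian subtlety enters, and where plain monotonicity is insufficient. The Morse--Bott character of $\|\mu\|^2$ — critical manifolds rather than isolated points, with non-integral periods when $\mu(x)$ sits off the integral lattice — forces perturbations that must preserve the index estimate, and may require the family Floer workaround promised in \cite{pt}. Finally, although (iii) claims only an additive quasi-isomorphism, matching the fractional Floer sectors of $X$ to the correct components of $I(X/\!/G)$ rather than some coarser decomposition demands care with conjugacy classes of stabilizers throughout the normal-form construction, and dovetails with the correspondence picture of \S\ref{twistedcor}.
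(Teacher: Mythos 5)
Your overall scaffolding---concentrate the Floer complex in $N$ via a monotone-index estimate, then read off the fibration over $X/\!/G$ from the normal form, then invoke Kirwan surjectivity---matches the paper's strategy. But there are substantive gaps in how you execute part (i) and, to a lesser extent, (ii) and (iv).

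For (i), you estimate $\mathrm{mix}$ for the raw Hamiltonian $H = \tfrac{K}{2}\|\mu\|^2$ by taking an orbit at a fixed-point locus with velocity proportional to $K\mu(x)^\vee$ and applying the normalized-degree formula (c) directly, obtaining $\mathrm{mix} = K\bigl(\|\mu(x)\|^2 - 4\langle\rho\,|\,\mu(x)^\vee\rangle\bigr) + O(1)$. This leading coefficient is \emph{not} made positive by the $G$-monotonicity condition \eqref{supermonotone}: with $\xi=\mu(x)^\vee$ that condition gives $\|\mu(x)\|^2 > 2\langle\rho\,|\,\mu(x)^\vee\rangle$, short by a factor of two, and your appeal to ``normalization conventions'' does not close the gap. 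The paper's actual non-abelian argument does not use $\tfrac{K}{2}\|\mu\|^2$ at all; it replaces it with the shifted Hamiltonian $F(\mu)=\tfrac{K}{2}\min_{g\in G}\|g\cdot\mu-2\rho\|^2$ (squared distance to the coadjoint orbit of $2\rho$), and the general action formula from \S\ref{floermix}(e,g) then produces the exact identity $\mathrm{mix}=K\|\mu-2\rho\|^2$ (up to the bounded constant), with no sign condition needed. The subtle content is then that the smoothing of $F$ across singular coadjoint orbits redirects the flow along walls to projections of $2\rho$, and it is \emph{there} that $G$-monotonicity guarantees the limit points can be included in $N$. The insight your proof lacks is precisely this coadjoint-shift of the Hamiltonian.

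For (ii) and (iv), your Leray-fibration and regular-representation argument captures the conclusion but glosses over what the paper flags as the hard step: the normalized degree $\mathrm{ndeg}$ \emph{jumps} over the fiber $\frg$ of complete geodesics as those geodesics acquire conjugate points, so the Morse flow cannot retract the space to the critical locus. The paper's fix is a homological retraction of the critical space $\frg$ to the region of normalized degree zero, using contractibility of $\frg$; you should spell this out rather than asserting that derived $LG$-invariants ``collapse $SH^*(T^*G)$ to scalars.'' For (iv), the paper's mechanism for killing non-topological differentials is the \emph{a priori} energy estimate together with the fact that the constant orbits in $\mu^{-1}(0)$ attain the maximal action $0$ in the normalization of \S\ref{floermix}(e), so any differential out of them must strictly increase action and hence land in positive $q$-powers; this, together with propagation by shift operators of $1$-parameter subgroups, is the argument, not an appeal to a Leray degeneration. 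Your reading of (v) as the tautological identification of the monodromy local system with the $LG$-action on $SH^*(T^*G)$ is correct.
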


\begin{remark}Statements (ii) and (iii) imply the collapse of the $q$-filtration and Leray spectral sequence, 
and the strictness of the tensor product presentation.
\end{remark}

\begin{proof}[Proof of \ref{techtheorem}.i]
The action increases under continuation, so mix decreases. For a torus, $\mathrm{ndeg} =0$, 
and cohomology classes on orbit spaces only increase degree; the first formula in 
(\ref{floermix}.g) then implies that $\mu\to 0$ as $K\to \infty$, so the orbits continue 
into $N$ (and an effective bound can be specified). 

For the non-Abelian case, define $F(\mu): = \frac{K}{2}\min_{g\in G} \|g.\mu-2\rho\|^2$, 
in terms of the squared distance to the co-adjoint orbit of $2\rho$. We use the second formula 
in (\ref{floermix}.g), having assumed that $\mu$ lands in the dominant Weyl chamber, to get 
(leaving out the ambiguity $C$)
\[
\mathrm{mix} = -4K\langle\rho | \mu-2\rho \rangle + 2K\langle \mu-2\rho | \mu \rangle 
	- K\|\mu-2\rho\|^2 = K\|\mu-2\rho\|^2. 
\]
This seemingly forces all orbits to flow to the orbit of $2\rho$. However, we 
left out the smoothing of~$F$ on the singular co-adjoint orbits, which keeps those 
orbits flowing instead, along their walls in the Weyl chamber, to the projections of $2\rho$. Under the $G$-monotone assumption, 
all limiting orbits may be included in the normal neighnorhood $N$ and the argument proceeds as before.
\end{proof}

\begin{proof}[Outline proof of \ref{techtheorem}.ii, iii]
The $g$-twisted forms, $g\in G$, of $\Phi_X$ as $K\to\infty$ are fibered equivariantly 
over the group manifold $G$. Over each $g\in G$, the space of orbits comprises the geodesics 
in $G$ from $p$ to $pg$, as $p$ ranges in $G$, embedded in the fibers $T^*G$ of the normal neighborhood $N$. 

Over all of $G$, the total space of Floer orbits is the space $G\times \mathfrak{g}$ of complete geodesics. Quotienting by $G$ 
leaves the space $I(X/\!/G)\times \mathfrak{g}$. The section at the constant orbits is compatible with the (zero) Floer 
differential: a non-zero differential would violate Kirwan surjectivity.  

Contractibility of $\frg$ secures the desired quasi-isomorphism in the Abelian case. When $\frg$ is non-abelian, this argument is 
incomplete: the normalized degree ndeg jumps over $\frg$, as geodesics acquire conjugate points, so that the Morse flow cannot 
retract the space to the critical locus. The workaround is to observe that contractibility allows us to (homologically) retract 
the critical space $\frg$ to the region of normalized degree zero without changing the homology of the total complex, whereupon the entire complex retracts there.  
\end{proof}

%When $G$ is a torus, this inclusion is a quasi-isomorphism, completing the proof. 
%For non-abelian $G$, geodesics containing conjugate points carry negative normalized Floer degree, matching the dimension of their 
%respective Morse cells in the fiber-wise path  fibration of $G$. Their degree thus jumps over $\frg$, and the correct Floer complex 
%would be based on a $G$-equivariant constructible coefficient system over $\frg$, of rank $1$. We are not aware of a working  
%Floer theory with constructible coefficients on orbit spaces, so we cannot use this clear, but lacunary argument in the 
%non-Abelian case. Instead, we note that the absence of Floer differentials in the unit section and the compatibility with .  

\begin{remark}
A swindle in this beautiful argument was to invoke a well-defined geometric complex for 
$K=\infty$. The work-around, to be explained in Part~II of the paper, performs a $K$-dependent \emph{action
truncation} of $\Phi_X$, with the effect of selecting the part contained in $N$;  
the limiting Floer complex is the $K\to \infty$ colimit of the truncations.    
\end{remark}

\begin{proof}[Proof of \ref{techtheorem}.iv]
The \emph{a priori energy estimate} shows that a Floer differential which is not of topological 
origin increases the action strictly. The constant orbits within $\mu^{-1}(0)$ have the 
maximal action $0$ with respect to the normalization of \S\ref{floermix}.e above, so Floer 
differentials originating there must land in positively $q$-shifted orbits. The same then applies 
to their transforms under shift operators from $1$-parameter subgroups of $G$. When the action of $G$ is free, that settles all the 
orbits. In the locally free case, we apply the same argument to the orbits of maximal action within the fractional equivalence 
class and their shifts. 
\end{proof} 

\begin{proof}[Proof of \ref{techtheorem}.v]
Tautological from the definitions of the $LG$-actions.
\end{proof}

\begin{proof}[Sketch of proof of Theorem~\ref{extra}.i]
On Floer orbits that do \emph{not} approach $\mu^{-1}(0)$, the action necessarily goes to $\infty$ 
with $K$, under continuation. The \emph{a priori} energy estimate shows them to form a subcomplex and 
an ideal for the Floer product. Quotienting out by that ideal leaves a quotient complex,  
to which our earlier analysis carries over. The short exact sequence of complexes leads to a long exact sequence
\[
\dots  \xrightarrow{\ }  tHF^*_{LG} \xrightarrow{\iota_* } QH^*_{LG}(X) \xrightarrow{q_* } A^*(X/\!/G) \xrightarrow{\ } \dots
\]
Its splitting into short exact sequences follows by ruling out equivariant differentials 
originating near $\mu^{-1}(0)$ and leading to trapped orbits. 

On $\mu^{-1}(0)$, the 
(constant) orbits are already present in the small~$K$ Floer complex (the Morse complex for $\|\mu\|^2$), 
so no differentials originate there (for $\bQ$-cohomology). The action of shift operators of $H_*^G(\Omega G)$ settles all orbits. 
A twisted component of the inertia stack $I(X/\!/G)$ is similarly covered 
by fractional orbits, and is matched under the twisted Lagrangian correspondence (Remark~\ref{twistedcor}). The correspondence 
determines classes in $QH^*_{LG}(X)$, showing the lifting of all classes from the twisted sector and (in light of Theorem~\ref{techtheorem}.iii) the vanishing of differentials. 
\end{proof}

The proof of Theorem~\ref{extra}.ii, the agreement of the quotient $A^*$ with the $LG$-equivariant symplectic cohomology of~$X$ 
with supports on $\mu^{-1}(0)$, is postponed to Part~II of the paper. 

\begin{remark}
In the general case, our strictness argument for the tensor product 
\[
QH^*_{LG} \cong QH^*_G\otimes^L_{H_*^G(\Omega G)} H^*(BG) 
\]
only applies to the $G$-equivariant cohomology of the quotient Floer complex modulo trapped classes. 
It is conceivable that some branches of $QH^*_G(X)$ meet the section 
$\mathrm{Spec}\, H^*(BG) \subset \mathrm{Spec}\,H^G_*(\Omega G)$ 
in a positive dimension, leading to Tor groups in the tensor product. Notably, this is the case for trivial $G$-actions.  
\end{remark}

\noindent
Daniel Pomerleano, U Mass.~Boston, 
\texttt{daniel.pomerleano@umb.edu}

\noindent
Constantin Teleman, UC Berkeley, \texttt{teleman@berkeley.edu}

\end{document}